\newtheorem{thm}{Theorem}
\newtheorem{lem}[thm]{Lemma}
\newtheorem{cor}[thm]{Corollary}
\newtheorem{prop}[thm]{Proposition}
\newtheorem{obs}[thm]{Observation}
\theoremstyle{definition}
\newtheorem{ex}[thm]{Example}
\newcommand{\LieG}{{\mathsf G}}
\newcommand{\LieK}{{\mathsf K}}
\newcommand{\LieH}{{\mathsf H}}
\newcommand{\LieL}{{\mathsf L}}
\newcommand{\lieA}{{\mathfrak{a}}}
\newcommand{\lieG}{{\mathfrak{g}}}
\newcommand{\lieH}{{\mathfrak{h}}}
\newcommand{\lieK}{{\mathfrak{k}}}
\newcommand{\lieM}{{\mathfrak{m}}}
\newcommand{\lieP}{{\mathfrak{p}}}
\newcommand{\lieSO}{{\mathfrak{so}}}
\newcommand{\lieSU}{{\mathfrak{su}}}
\newcommand{\lieT}{{\mathfrak{t}}}
\newcommand{\fix}{{\mathfrak{fix}}}
\newcommand{\Root}{{\mathcal{R}}}
\newcommand{\Antipodal}{{\mathcal{A}}}
\newcommand{\Ad}{{\mathrm{Ad}}} %% Abbildungen in Roman
\newcommand{\ad}{{\mathrm{ad}}}
\newcommand{\Exp}{{\mathrm{Exp}}}
\newcommand{\Aut}{{\mathrm{Aut}}} %% Teilmengen in sans serif
\newcommand{\Int}{{\mathrm{Int}}}
\newcommand{\Fix}{{\mathrm{Fix}}}
\newcommand{\reg}{{\mathrm{reg}}}
\def\N{{\mathbb N}}
\def\Z{{\mathbb Z}}
\def\R{{\mathbb R}}
\def\C{{\mathbb C}}
\title{Natural $\Gamma$-symmetric structures on $R$-spaces}
\author[Quast]{Peter Quast}
\author[Sakai]{Takashi Sakai}
\address[Quast]{Institut f\"{u}r Mathematik, Universit\"{a}t Augsburg, 
86135 Augsburg, Germany}
\email{quast@math.uni-augsburg.de}
\address[Sakai]{Department of Mathematical Sciences,
Tokyo Metropolitan University,
1-1 Min\-ami-Osawa, Hachioji-shi,
Tokyo 192-0397, Japan}
\email{sakai-t@tmu.ac.jp}
\subjclass[2010]{Primary 53C30. Secondary 53C35, 17B40}
\keywords{$\Gamma$-symmetric spaces, $R$-spaces, antipodal sets}
\thanks{The second author, Takashi Sakai, was supported by JSPS KAKENHI Grant Number 17K05223.}
\begin{document}

\begin{abstract}
We classify $R$-spaces that admit a certain natural $\Gamma$-symmetric structure. 
We further determine the maximal antipodal sets 
of these structures.
\end{abstract}

\maketitle
\section{Introduction}
In 1981 \textsc{Lutz} \cite{Lutz} introduced $\Gamma$-symmetric spaces, a  generalization of
$k$-symmetric spaces. 
In this article we consider natural $\Gamma$-symmetric 
structures on $R$-spaces using $\Gamma$-symmetric triples introduced by 
\textsc{Goze} and \textsc{Remm} \cite{GozeRemm}. In Theorem \ref{THM: Main}
$R$-spaces admitting a natural $\Gamma$-symmetric structure 
are characterized in terms of roots.
This allows us to classify these $R$-spaces in Section \ref{SEC: Classification}. 
For $\Gamma=\Z_2$ we recover the famous symmetric $R$-spaces of \textsc{Kobayashi}, \textsc{Nagano} \cite{KN} and 
\textsc{Takeuchi} \cite{Ta65}.  In Section \ref{SECT: Maximal antipodal sets} we further determine 
the  maximal antipodal sets of the natural $\Gamma$-symmetric 
structures and extend a result due to \textsc{Tanaka} and \textsc{Tasaki}  \cite{TT}.
We further describe the relation of these sets 
with the topology of the $R$-space generalizing a theorem of \textsc{Takeuchi} \cite{Ta89}.
Finally, in Section \ref{SECT: Maximality}, we show that natural $\Gamma$-symmetric structures 
are maximal
among certain types of  such structures.

\section{Preliminaries}

\subsection{$\Gamma$-symmetric triples}
\label{SECT: Gamma symmetric triples}
Following \textsc{Goze} and \textsc{Remm} \cite{GozeRemm},
a \emph{$\Gamma$-symmetric triple} is a triple $(\LieL,\LieH, \Gamma)$
consisting of a connected Lie group $\LieL,$ a closed subgroup $\LieH$ of $\LieL$ and a finite 
abelian subgroup $\Gamma$ of the automorphism group  $\Aut(\LieL)$ of $\LieL$ such that
\begin{equation}
 \label{EQ: Gamma sym triple}
 \Fix(\Gamma,\LieL)_0\subset \LieH\subset \Fix(\Gamma,\LieL).
\end{equation}
 Here 
 $\Fix(\Gamma,\LieL)=\{l\in\LieL\; |\; \gamma(l)=l\;\text{for all}\; \gamma\in \Gamma\}$ 
 and $\Fix(\Gamma,\LieL)_0$ denotes its identity component.
 If $\Gamma=\Z_2,$ then $(\LieL,\LieH)$ is a symmetric pair (see e.g.\ \cite[p.\ 209]{He}). 
  \textsc{Goze} and \textsc{Remm} \cite{GozeRemm} have shown that if  
  $(\LieL,\LieH, \Gamma)$  is a $\Gamma$-symmetric triple, then the coset space 
  $\LieL/\LieH$ carries a natural $\Gamma$-symmetric structure in the sense of \textsc{Lutz} \cite{Lutz}:
  For a point $x=l_x\LieH\in\LieL/\LieH,\; l_x\in\LieL,$ we set 
\begin{equation}
 \label{EQ: Lutz Gamma}
 \Gamma_{x}:=\{\gamma_x:\LieH/\LieL\to \LieH/\LieL\; |\; \gamma\in\Gamma\}, 
\end{equation}
with $\gamma_{x}(l\LieH)=l_x\gamma(l_x^{-1}l)\LieH$ for  $l\LieH\in\LieL/\LieH,\; l\in\LieL.$
Denoting by $L_l$ the left action of $l\in\LieL$ on the coset space $\LieL/\LieH$ 
we see that our $\Gamma$-symmetric structure on $\LieL/\LieH$ is $\LieL$-equivariant, since
\begin{equation}
\label{EQ: equivariant Gamma structure}
 \gamma_{x}=L_{l_x}\circ \gamma_{o}\circ L_{l_x^{-1}}
\end{equation}
for $x=l_x\LieH$ and $o=e\LieH.$

\subsection{Root systems of symmetric spaces}
In this section we merely recall well known facts about symmetric spaces and we introduce our notation. 
For further details, explanations and proofs on the well-established theory of 
symmetric spaces we refer to the standard literature \cite{He}, \cite{L-II} and \cite[Chap.\ 8]{Wolf}.
Let us fix an adjoint symmetric space $P$ of compact type,
that is $P$ is a symmetric space of compact type 
with the property that if $P$ finitely covers another symmetric space, 
 then $P$ is isomorphic to it. Let $\LieG$ denote the identity component of the isometry group of $P.$ 
We choose a base point $o\in P$ and we set $\LieK=\{g\in\LieG\; |\;  g(o)=o\}.$ 
The principal bundle 
$$\pi: \LieG\to P, \quad g\mapsto g(o)$$
gives an identification of 
$P$ with the coset space $\LieG/\LieK.$ Let $s_o$ be the geodesic symmetry of $P$ at $o$ and let 
$\sigma:\LieG\to\LieG,\; g\mapsto s_ogs_o.$ 
The Lie algebra $\lieK$ of $\LieK$ is the fixed point set of the 
differential $\sigma_*$ of $\sigma$ at the identity. Let $\lieP$ be the fixed space of $-\sigma_*.$
The map $\pi_*|_\lieP:\lieP\to T_oP,$ where $\pi_*$ denotes the differential of $\pi$ at the identity, is an isomorphism.
The Riemannian exponential map $\Exp_o:T_oP\to P$ satisfies 
$\pi\circ \exp|_\lieP=\Exp_o\circ \pi_*|_\lieP,$ 
where $\exp:\lieG\to\LieG$ is the Lie theoretic exponential map. Using the
identification by $\pi_*|_\lieP$ the adjoint action of $\LieK$ on $\lieP$ coincides with the linear isotropy action 
of $\LieK$ on $T_oP.$\par
Let $\lieA$ be a maximal abelian subspace of $\lieP.$ The dimension of $\lieA$ is called the \emph{rank} of $P$ and is
usually denoted by $r.$ Let $\lieA^*$ be the dual vector space of $\lieA.$ For each $\alpha\in\lieA^*$ we set
$$\lieG_\alpha:=\{Y\in\lieG\otimes \C\; |\;  [H,Y]=\sqrt{-1}\alpha(H)Y\; \text{for all}\; 
H\in\lieA\}.$$
The root system of $P$ is 
$$\Root:=\{\alpha\in\lieA^*\; |\; \alpha\neq 0\; \text{and}\; \lieG_{\alpha}\neq\{0\}\}.$$
Let $\Sigma:=\{\alpha_1,\dots,\alpha_r\}$ be a system of simple roots of $\Root$ 
and let $\Root^+\subset\Root$ be the corresponding set of positive roots. Then $\Sigma$ is 
a basis of $\lieA^*$ with the property that every root $\alpha\in\Root^+$ can be 
written as 
\begin{equation}
 \label{EQ: positive root as sum}
 \alpha=\sum\limits_{j=1}^rc^\alpha_j \alpha_j \qquad\text{with}\; c^\alpha_j\in\N\cup\{0\}\; 
 \text{for all}\; j\in I_{\reg}:=\{1,\dots ,r\}.
\end{equation}
We get the following  direct sum decompositions:
\begin{eqnarray*}
 \lieP&=&\lieA\oplus\sum\limits_{\alpha\in\Root^+}\lieP_\alpha \quad \text{with}\quad \lieP_\alpha=(\lieG_\alpha\oplus\lieG_{-\alpha})\cap\lieP,\\
 \lieK&=&\lieM\oplus\sum\limits_{\alpha\in\Root^+}\lieK_\alpha \quad \text{with}\quad
 \lieK_\alpha=(\lieG_\alpha\oplus\lieG_{-\alpha})\cap\lieK\quad \text{and}\quad
 \lieM=\{Y\in\lieK\; |\;  [\lieA,Y]=\{0\}\}.
\end{eqnarray*}
Let $\{\xi_1,\dots,\xi_r\}$ be the basis of $\lieA$  dual to $\Sigma,$ that is 
$\alpha_j(\xi_k)=0$ if $j\neq k$ and $\alpha_j(\xi_j)=1.$ Then we have for each $\alpha\in\Root^+$ and 
each $j\in I_{\reg}:=\{1,\dots, r\}$
by
\eqref{EQ: positive root as sum}: 
\begin{equation}
 \label{EQ: coefficient}
 \alpha(\xi_j)=c^{\alpha}_j.
\end{equation}\par

It is well known (see e.g.\ \cite[pp.\ 25, 69]{L-II}) that the unit lattice 
$\Lambda=\{H\in\lieA\; |\; \exp(H)(o)=o\}=\{H\in\lieA\; |\; \exp(H)\in\LieK\}$ 
of the adjoint symmetric space $P$ of compact type 
is given by 
$$\Lambda=\{X\in\lieA\; |\; \alpha(X)\in\pi\Z\;\; \text{for all}\; \alpha\in\Root\}=\mathrm{span}_{\pi\Z}(\xi_1,\dots,\xi_r).$$
In particular the elements 
\begin{equation}
\label{EQ: k-j}
 k_j:=\exp(\pi\xi_j),\quad j\in\{1,\dots, r\},
\end{equation}
lie in $\LieK$ and we have
\begin{equation}
\label{EQ: gamma_j}
 \gamma^j:=\Int(k_j)\in\Aut(\LieK_0),
\end{equation}
where $\LieK_0$ denotes the identity component of $\LieK.$ Note that 
$\gamma^j\gamma^j=e$ for all $j\in\{1,\dots, r\},$ since $\Ad_\LieG(\exp(2
\pi\xi_j))$ is the identity on $\lieG.$

\subsection{$R$-spaces}
An \emph{$R$-space} is an orbit $\Ad_\LieG(\LieK_0)\xi\subset\lieP$ for some $\xi\in\lieP\setminus\{0\}.$
In the literature $R$-spaces are sometimes called \emph{orbits of $s$-representations}.
If $P$ is a compact Lie group, these orbits are also known as generalized (complex) flag manifolds. 
Every $R$-space in $\lieP$ is 
equivariantly isomorphic to the adjoint orbit of a
\emph{canonical element} 
in the sense of \textsc{Burstall} and \textsc{Rawnsley} \cite{BR}. This is an element of the form 
$$\xi_I:=\sum\limits_{i\in I} \xi_i,$$
where $I$ is a non-empty subset of $I_{\reg}=\{1,\dots,r\}.$
We denote the corresponding $R$-space by 
$$X_I:=\Ad_{\LieG}(\LieK_0)\xi_I\subset\lieP.$$
As a homogeneous space, $X_I$  can be identified with the coset space 
$\LieK_0/\LieH_I$ where
$$\LieH_I=\{k\in\LieK_0\; |\;  \Ad_\LieG(k)\xi_I=\xi_I\}\subset\LieK_0.$$
\par

\begin{lem}
\label{LEM: inclusion isotropy}
 Let $\emptyset\neq\tilde{I}\subset I\subset I_{\reg}=\{1,\dots, r\},$ then $\LieH_I\subset \LieH_{\tilde{I}}.$
\end{lem}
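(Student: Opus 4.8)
The plan is to prove the inclusion first on the level of Lie algebras by a root computation, and then to transfer it to the groups using the structure of the (in general disconnected) isotropy subgroup. Write $\lieH_I\subseteq\lieK$ for the Lie algebra of $\LieH_I$. Since $\LieH_I=\{k\in\LieK_0\mid \Ad_\LieG(k)\xi_I=\xi_I\}$, differentiation at the identity gives
\[
\lieH_I=\{Y\in\lieK\mid [\xi_I,Y]=0\}=\ker\!\big(\ad\xi_I|_\lieK\big),
\]
and $\ad\xi_I$ maps $\lieK$ into $\lieP$ since $[\lieK,\lieP]\subseteq\lieP$ and $\xi_I\in\lieP$.

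For the first step I would use the decompositions of $\lieK$ and $\lieP$ recalled above. As $\xi_I\in\lieA$ and $[\lieA,\lieM]=\{0\}$, the map $\ad\xi_I$ annihilates $\lieM$; moreover it sends each $\lieK_\alpha$ into $\lieP_\alpha$, this restriction being an isomorphism onto $\lieP_\alpha$ when $\alpha(\xi_I)\neq 0$ and the zero map when $\alpha(\xi_I)=0$. Since the subspaces $\lieP_\alpha$ ($\alpha\in\Root^+$) are linearly independent in $\lieP$, no cancellation between different $\alpha$ is possible, and therefore
\[
\lieH_I=\lieM\oplus\sum_{\alpha\in\Root^+_I}\lieK_\alpha,\qquad \Root^+_I:=\{\alpha\in\Root^+\mid \alpha(\xi_I)=0\}.
\]
By \eqref{EQ: coefficient}, $\alpha(\xi_I)=\sum_{i\in I}\alpha(\xi_i)=\sum_{i\in I}c^\alpha_i$, and as every $c^\alpha_i$ lies in $\N\cup\{0\}$ this sum vanishes exactly when $c^\alpha_i=0$ for all $i\in I$. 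Because $\tilde I\subseteq I$, the vanishing of $\alpha(\xi_I)$ forces that of $\alpha(\xi_{\tilde I})$, so $\Root^+_I\subseteq\Root^+_{\tilde I}$ and hence $\lieH_I\subseteq\lieH_{\tilde I}$; in particular $(\LieH_I)_0\subseteq(\LieH_{\tilde I})_0$.

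The second step is the essential point, because $\LieH_I$ need not be connected (for instance for real Grassmannians and the partial flag manifolds fibering over them). I would invoke the standard description of the centralizer in the connected group $\LieK_0$ of an element of $\lieA$ (see e.g.\ \cite{L-II}): with $\LieM^{*}:=Z_{\LieK_0}(\lieA)$, the group $\LieH_I=Z_{\LieK_0}(\xi_I)$ is generated by $\LieM^{*}$ together with the connected subgroups $\exp\lieK_\alpha$, $\alpha\in\Root^+_I$; equivalently $\LieH_I=(\LieH_I)_0\cdot\LieM^{*}$. Now $\LieM^{*}\subseteq\LieH_I\cap\LieH_{\tilde I}$ because $\xi_I,\xi_{\tilde I}\in\lieA$, and for $\alpha\in\Root^+_I\subseteq\Root^+_{\tilde I}$ one has $\lieK_\alpha\subseteq\lieH_{\tilde I}$ and hence $\exp\lieK_\alpha\subseteq(\LieH_{\tilde I})_0$. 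Thus each of the chosen generators of $\LieH_I$ lies in $\LieH_{\tilde I}$, and therefore $\LieH_I\subseteq\LieH_{\tilde I}$. Alternatively one may write $\LieH_I=(\LieH_I)_0\cdot\big(N_{\LieK_0}(\lieA)\cap\LieH_I\big)$ and observe that $N_{\LieK_0}(\lieA)\cap\LieH_I$ induces on $\lieA$ the parabolic subgroup of the Weyl group generated by the reflections $s_{\alpha_j}$ with $j\notin I$; each such reflection fixes $\xi_{\tilde I}$ since $\alpha_j(\xi_{\tilde I})=0$ for $j\notin I\supseteq\tilde I$, so again $N_{\LieK_0}(\lieA)\cap\LieH_I\subseteq\LieH_{\tilde I}$. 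In both arguments the inclusion of Lie algebras is immediate, and the only genuine obstacle is the bookkeeping for the components of $\LieH_I$.
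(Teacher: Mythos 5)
Your argument is correct, but it takes a genuinely different route from the paper's. The paper does not decompose $\LieH_I$ at all: it passes to the non-compact dual $P^*$, views $X_I$ as an isotropy orbit in $T_{o^*}P^*$, identifies $\LieH_I$ with $\LieG^*_{\xi_I(\infty)}\cap\LieK_0$, the intersection of $\LieK_0$ with the parabolic subgroup of $\LieG^*$ stabilizing the point at infinity determined by $\xi_I$, and then reads off $\LieH_I\subset\LieH_{\tilde I}$ from the standard inclusion relations among parabolic subgroups (citing Ballmann--Gromov--Schroeder and Eberlein). You instead prove the Lie algebra inclusion by the root-space computation (which the paper itself records later, when computing $\lieH_I=\lieM\oplus\sum_{\alpha\in\Root^0_I}\lieK_\alpha$) and then handle the components of $\LieH_I$ via the decomposition $\LieH_I=(\LieH_I)_0\cdot Z_{\LieK_0}(\lieA)$, respectively $\LieH_I=(\LieH_I)_0\cdot\bigl(N_{\LieK_0}(\lieA)\cap\LieH_I\bigr)$. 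That decomposition is the one step you quote rather than prove, and since it carries the entire difficulty (the Lie algebra inclusion is immediate), you should at least sketch it: any maximal abelian subspace of $\lieP$ through $\xi_I$ is maximal abelian in the centralizer of $\xi_I$ in $\lieP$, so any two of them are conjugate under $(\LieH_I)_0$; this reduces an arbitrary $k\in\LieH_I$ modulo $(\LieH_I)_0$ to an element of $N_{\LieK_0}(\lieA)\cap\LieH_I$, whose Weyl class fixes $\xi_I$, a point of the closed fundamental chamber, hence is a product of simple reflections $s_{\alpha_j}$ with $\alpha_j(\xi_I)=0$, i.e.\ $j\notin I$, and each of these fixes $\xi_{\tilde I}$ because $\alpha_j(\xi_{\tilde I})=0$ for $j\notin I\supset\tilde I$. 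With that supplied, your proof is complete. As for what each approach buys: the paper's argument avoids all component bookkeeping by outsourcing it to the parabolic machinery at infinity, which it cites anyway; your argument stays entirely inside the compact picture (root decompositions, Weyl group, centralizers), is closer in spirit to the computations of Sections 3 and 5, but relies on the centralizer decomposition, a structural fact of essentially the same depth as the parabolic inclusions the paper invokes.
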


\begin{proof}
 The euclidean vector space $\lieP\cong T_oP$ carries an orthogonal
 Lie triple structure given by the curvature tensor of $P$ at $o$
 which can be expressed by double Lie brackets. Using the canonical isomorphism  $\pi_*|_\lieP:\lieP\to T_oP$ 
we can identify $\LieK_0$ 
with a subgroup of the group of orthogonal 
 Lie triple automorphisms of $\lieP$  via the linear isotropy representation.  If $P$ is simply connected, 
 then
$\LieK_0$ is actually the identity component of the group of orthogonal 
 Lie triple automorphisms of $\lieP$  (see e.g.\ proof of \cite[Thm.\ 8.4.7]{Wolf}). 
 Actually the same holds true if $P$ is not simply connected, since the Lie algebra $\lieK$
 of $\LieK_0$ coincides with the Lie algebra of the isotropy group of the universal covering of $P$ 
 at a point in a fiber over $o.$\par
 Let $\lieP^*$ denote the euclidean vector space $\lieP$ endowed with 
 the Lie triple structure that differs from the one of $\lieP$ by a sign. 
 Then 
 $\LieK_0$ is again the identity component of the 
 group of orthogonal 
 Lie triple automorphisms of $\lieP^*.$ The symmetric space $P^*$ corresponding to $\lieP^*$ 
 is the non-compact dual symmetric space of $P,$ and $\lieP^*$ is canonically isomorphic
to the tangent space 
 of $P^*$ at a certain point $o^*\in P^*.$ Since $P^*$ is simply connected, $\LieK_0$ can be identified with 
 the isotropy group of $o^*$ within the identity component $\LieG^*$ of the isometry group of $P^*.$\par
We now consider $X_I=\Ad_\LieG(\LieK_0)\xi_I$ as an isotropy orbit of $P^*$ lying in 
$\lieP^*\cong T_{o^*}P^*.$ By looking at the corresponding geodesic rays, 
the isotropy orbit $X_I\subset\lieP^*$ induces a $\LieK_0$ orbit 
$X_I(\infty)$
in the boundary  at infinity  $P^*(\infty)$ of $P^*.$ The $\LieK_0$-orbit at infinity $X_I(\infty)$
is also a $\LieG^*$-orbit at infinity (see e.g.\ \cite[Cor.\ 2.17.2]{Eberlein}). The isotropy group 
$\LieG^*_{\xi_I(\infty)}\subset\LieG^*$ of the point 
$\xi_I(\infty)\in P^*(\infty)$ 
corresponding to $\xi_I$ is a parabolic subgroup of $\LieG^*.$ Our claim 
follows from the inclusions of the corresponding parabolic subgroups (see \cite[p.\ 249]{BGS}, \cite[Sect.\ 2.17]{Eberlein})
since the isotropy of $\xi_I(\infty)$ within $\LieK_0$ coincides with $\LieH_I$ and 
$\LieH_I=\LieG^*_{\xi_I(\infty)}\cap\LieK_0.$
\end{proof}

\section{Natural $\Gamma$-symmetric structures on $R$-spaces}
\subsection{Admissible subsets}
Let $I$ be a non-empty subset of $I_{\reg}=\{1,\dots, r\}$ 
and let $X_I\cong \LieK_0/\LieH_I.$  We define
$$
\Gamma^I=\langle \gamma^i:\; i\in I\rangle\subset\Aut(\LieK_0)$$
as the group generated by the elements $\gamma^i$ given in \eqref{EQ: gamma_j} with $i\in I.$
As a group, $\Gamma^I$ is isomorphic to $(\Z_2)^{|I|}$ where $|I|$ is the cardinality of 
$I.$ A non-empty subset $I$ of $I_{\reg}=\{1,\dots, r\}$  is called \emph{admissible}
if 
$(\LieK_0,\LieH_I,\Gamma^I)$ is a $\Gamma^I$-symmetric triple, that is by \eqref{EQ: Gamma sym triple} if 
 \begin{equation}
 \label{EQ: Gamma sym pair}
  \Fix(\Gamma^I,\LieK_0)_0\subset \LieH_I\subset \Fix(\Gamma^I,\LieK_0).
 \end{equation}

\begin{obs}\label{OBS: right hand inclusion}
 The right hand side inclusion in 
 \eqref{EQ: Gamma sym pair}, $\LieH_I\subset \Fix(\Gamma^I,\LieK_0),$ is always true.
\end{obs}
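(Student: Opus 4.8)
The plan is to reduce the inclusion $\LieH_I\subset\Fix(\Gamma^I,\LieK_0)$ to a commutation statement for a single generator. Since $\Gamma^I=\langle\gamma^i:i\in I\rangle$ and the generators $\gamma^i=\Int(k_i)$ with $k_i=\exp(\pi\xi_i)$ are inner, it is enough to prove that every $k\in\LieH_I$ is fixed by each $\gamma^i$, $i\in I$; equivalently, that $k$ commutes with $k_i$ for every $i\in I$. Once this is known, $\gamma^i(k)=k_ikk_i^{-1}=k$ for each generator, hence $\gamma(k)=k$ for all $\gamma\in\Gamma^I$, which is exactly the assertion $k\in\Fix(\Gamma^I,\LieK_0)$.

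The first step is to pass from the single canonical element $\xi_I=\sum_{i\in I}\xi_i$ to the individual $\xi_i$. For this I would apply Lemma \ref{LEM: inclusion isotropy} with $\tilde I=\{i\}\subset I$, giving $\LieH_I\subset\LieH_{\{i\}}$. Thus any $k\in\LieH_I$ satisfies $\Ad_\LieG(k)\xi_i=\xi_i$ for every $i\in I$, not merely $\Ad_\LieG(k)\xi_I=\xi_I$.

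The second step is the computation. Using the equivariance of the exponential map of $\LieG$ under conjugation, for $k\in\LieH_I$ and $i\in I$ one has
$$k\,k_i\,k^{-1}=k\,\exp(\pi\xi_i)\,k^{-1}=\exp\!\big(\pi\,\Ad_\LieG(k)\xi_i\big)=\exp(\pi\xi_i)=k_i .$$
Hence $k$ commutes with $k_i$, so $\gamma^i(k)=k_ikk_i^{-1}=k$. Combining this over all generators $\gamma^i$ of $\Gamma^I$ yields $k\in\Fix(\Gamma^I,\LieK_0)$, as desired.

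I do not expect a genuine obstacle here: the only point that needs care is the justification that fixing $\xi_I$ forces fixing each $\xi_i$ with $i\in I$, which is precisely what Lemma \ref{LEM: inclusion isotropy} supplies, while the remaining manipulation with $\Ad$ and $\exp$ is routine. It is worth remarking that the argument uses nothing about admissibility of $I$, in agreement with the statement that this inclusion always holds.
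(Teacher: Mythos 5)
Your proposal is correct and follows essentially the same route as the paper: both arguments reduce to the generators $\gamma^i=\Int(k_i)$, invoke Lemma \ref{LEM: inclusion isotropy} with $\tilde I=\{i\}$ to get $\Ad_\LieG(k)\xi_i=\xi_i$ for $k\in\LieH_I$, and then use the conjugation-equivariance of $\exp$ to conclude that $k$ commutes with $k_i$, hence is fixed by $\gamma^i$. The only difference is cosmetic (you show $kk_ik^{-1}=k_i$ while the paper computes $\gamma^i(k)k^{-1}=e$), so there is nothing further to add.
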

\begin{proof}
Let $k\in\LieH_I$ and let $i\in I$ then we have
$\gamma^i(k)k^{-1}=\exp(\pi\xi_i)k\exp(-\pi\xi_i)k^{-1}
=\exp(\pi\xi_i)\exp(-\pi\Ad_\LieG(k)\xi_i)
=\exp(\pi\xi_i)\exp(-\pi\xi_i)=e,$ 
 since 
$\LieH_I\subset \LieH_{\{i\}}$ by Lemma \ref{LEM: inclusion isotropy}. Thus 
$\gamma^i(k)=k.$
\end{proof}

Now we only have to deal with the left hand side inclusion in \eqref{EQ: Gamma sym pair},
$
 \Fix(\Gamma^I,\LieK_0)_0\subset \LieH_I,
$
which in view of Observation \ref{OBS: right hand inclusion} is equivalent to 
$$\lieH_I=\fix(\Gamma^I,\LieK_0),$$ 
where  $\lieH_I$ denotes the Lie algebra of $\LieH_I$ and $\fix(\Gamma^I,\LieK_0)$  the Lie algebra of $\Fix(\Gamma^I,\LieK_0).$ 
We have
$$\lieH_I=\{X\in\lieK\; |\; [X,\xi_I]=0\}=\lieM\oplus \sum\limits_{\alpha\in  \Root^0_I}\lieK_\alpha$$
 with 
\begin{equation}
\label{EQ: Root-0-I}
  \Root^0_I:=\{\alpha\in\Root^+:\; \alpha(\xi_I)=0\}
  =\{\alpha\in\Root^+\; |\;  \alpha(\xi_i)=0\; \text{for all}\;i\in I\},
\end{equation}
and 
\begin{eqnarray*}
 \fix(\Gamma^I, \LieK_0)&=& \{X\in\lieK\; |\; \forall t\in\R\;\; \forall i\in I:
 \gamma^i(\exp(tX))=\exp(tX)\}\\
 &=& \{X\in\lieK\; |\; \forall t\in\R\;\; \forall i\in I:
 \Int(k_i)(\exp(tX))=\exp(t\Ad_\LieG(k_i)X)=\exp(tX)\}\\
 &=&\{X\in\lieK\; |\; \forall i\in I:
 \Ad_\LieG(k_i)X=X\}
\; =\;  \{X\in\lieK\; |\; \forall i\in I:
 e^{\pi\ad(\xi_i)}X=X\}
 \\
 &=& \lieM\oplus \sum\limits_{\alpha\in\Root^{\mathrm{ev}}_I}\lieK_\alpha\\
\end{eqnarray*}
with
$$\Root^{\mathrm{ev}}_I:= \{\alpha\in\Root^+\; |\;  \alpha(\xi_i)\;\text{is even for all}\; i\in I  \}.$$
Summing up we have shown:

\begin{thm}[Characterization of admissible subsets in terms of $\Root$]
\label{THM: Main}
A non-empty subset  $I$ of $I_{\reg}=\{1,\dots, r\}$ is admissible if and only if 
for all $\alpha\in\Root^+$ we have
$$\big(\forall i\in I:\; c^\alpha_i\; \text{even}\big)\implies\big(\forall i\in I: c^\alpha_i=0\big).$$
\end{thm}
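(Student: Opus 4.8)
The plan is to unwind the definitions already assembled in the text and observe that the theorem is essentially a restatement of the equality $\lieH_I=\fix(\Gamma^I,\LieK_0)$, which by Observation \ref{OBS: right hand inclusion} is the only remaining content of admissibility. Concretely, the common summand $\lieM$ appears in both $\lieH_I$ and $\fix(\Gamma^I,\LieK_0)$, so admissibility of $I$ is equivalent to the equality of the two sums $\sum_{\alpha\in\Root^0_I}\lieK_\alpha$ and $\sum_{\alpha\in\Root^{\mathrm{ev}}_I}\lieK_\alpha$. Since the subspaces $\lieK_\alpha$ for distinct $\alpha\in\Root^+$ are linearly independent and each is nonzero (as $\alpha$ is a root), this equality of subspaces holds if and only if the index sets coincide, i.e. $\Root^0_I=\Root^{\mathrm{ev}}_I$.

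Next I would compare the two root sets. By definition $\Root^0_I=\{\alpha\in\Root^+: c^\alpha_i=0\ \forall i\in I\}$ and $\Root^{\mathrm{ev}}_I=\{\alpha\in\Root^+: c^\alpha_i\ \text{even}\ \forall i\in I\}$, using the identity $\alpha(\xi_i)=c^\alpha_i$ from \eqref{EQ: coefficient}. The inclusion $\Root^0_I\subset\Root^{\mathrm{ev}}_I$ is immediate since $0$ is even. Therefore $\Root^0_I=\Root^{\mathrm{ev}}_I$ if and only if $\Root^{\mathrm{ev}}_I\subset\Root^0_I$, which says precisely: for every $\alpha\in\Root^+$, if $c^\alpha_i$ is even for all $i\in I$, then $c^\alpha_i=0$ for all $i\in I$. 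This is exactly the implication in the statement of the theorem, so I would simply record that the chain of equivalences closes.

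I expect no serious obstacle here; the one point that deserves an explicit sentence is the passage from equality of Lie subalgebras to equality of root sets, i.e. the linear independence of the root spaces $\lieK_\alpha$ (which is a standard consequence of the root space decomposition of $\lieK$, each $\lieK_\alpha$ being nonzero by definition of a root) together with the fact that adding the common piece $\lieM$ does not interfere. A second point worth a remark is that $I$ being admissible was already reduced in the text to the single inclusion $\Fix(\Gamma^I,\LieK_0)_0\subset\LieH_I$, and that this inclusion at the group level is equivalent to the corresponding inclusion of Lie algebras because $\LieH_I\subset\Fix(\Gamma^I,\LieK_0)$ forces $\LieH_I\supset\Fix(\Gamma^I,\LieK_0)_0$ as soon as $\lieH_I\supset\fix(\Gamma^I,\LieK_0)$; this last step uses that $\Fix(\Gamma^I,\LieK_0)_0$ is connected and hence generated by one-parameter subgroups. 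Assembling these observations in the order (i) reduce to $\lieH_I=\fix(\Gamma^I,\LieK_0)$, (ii) cancel $\lieM$ and use independence of root spaces to get $\Root^0_I=\Root^{\mathrm{ev}}_I$, (iii) rewrite this set equality via \eqref{EQ: coefficient} as the claimed implication, completes the proof.
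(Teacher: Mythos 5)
Your argument is correct and is essentially the paper's own proof: the authors likewise reduce admissibility to the Lie algebra equality $\lieH_I=\fix(\Gamma^I,\LieK_0)$ via Observation \ref{OBS: right hand inclusion}, compute both sides as $\lieM\oplus\sum_{\alpha\in\Root^0_I}\lieK_\alpha$ and $\lieM\oplus\sum_{\alpha\in\Root^{\mathrm{ev}}_I}\lieK_\alpha$, and read off the stated implication using $\alpha(\xi_i)=c^\alpha_i$. Your extra remarks (independence of the root spaces $\lieK_\alpha$, and the equivalence of the group-level inclusion with the Lie-algebra statement via connectedness of $\Fix(\Gamma^I,\LieK_0)_0$) are just the details the paper leaves implicit.
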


From Theorem \ref{THM: Main} we see:

\begin{cor}
Unions of admissible subsets of $I_\reg$ are admissible.
\end{cor}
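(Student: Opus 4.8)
The plan is to derive the corollary directly from the root-theoretic characterization in Theorem \ref{THM: Main}. Suppose $I=\bigcup_{s\in S}I_s$ where each $I_s$ is admissible and $S$ is a finite index set; since each $I_s$ is non-empty, so is $I$. To verify admissibility of $I$, take any $\alpha\in\Root^+$ and assume $c^\alpha_i$ is even for all $i\in I$. I must show $c^\alpha_i=0$ for all $i\in I$.

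The key step is the observation that the hypothesis is inherited by subsets: if $c^\alpha_i$ is even for all $i\in I$, then in particular $c^\alpha_i$ is even for all $i\in I_s$, for each fixed $s\in S$. Applying the admissibility of $I_s$ via Theorem \ref{THM: Main} to this same root $\alpha$, we conclude $c^\alpha_i=0$ for all $i\in I_s$. Since this holds for every $s\in S$ and $I=\bigcup_{s\in S}I_s$, we get $c^\alpha_i=0$ for all $i\in I$, which is exactly the conclusion required by Theorem \ref{THM: Main} for $I$ to be admissible.

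I do not anticipate a genuine obstacle here: the entire content is the elementary logical point that the implication in Theorem \ref{THM: Main}, read for each $I_s$, can be chained together because the left-hand side (all relevant coefficients even) only gets \emph{weaker} when one passes to a subset of indices, while the right-hand side (all relevant coefficients zero) conjoins nicely over a union. The only thing to be mildly careful about is that $S$ may be infinite a priori, but this causes no trouble since the argument is pointwise in $\alpha$ and the union of index sets is taken inside the finite set $I_{\reg}$, so one may harmlessly assume $S$ is finite (indeed $|S|\le r$). No calculation is needed beyond quoting Theorem \ref{THM: Main} twice.
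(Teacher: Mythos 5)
Your argument is correct and is exactly the reasoning the paper intends: the corollary is stated as an immediate consequence of Theorem \ref{THM: Main}, and your pointwise-in-$\alpha$ chaining of the implication over the constituent admissible sets is that consequence spelled out. Nothing is missing.
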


\subsection{Principal orbits}
A root system $\Root$ is called \emph{reduced}
if for any $\alpha\in\Root$ we have $\Z\alpha\cap \Root=\{-\alpha,\alpha\}.$
Details on root systems can be found in \cite[Chap.\ V]{Se} and \cite[Chap.\ VI]{Bourbaki}.

\begin{prop}
Let $\Root$ be reduced, then $I_\reg=\{1,\dots, r\}$ is admissible.
\end{prop}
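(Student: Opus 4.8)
The plan is to read off the criterion for $I=I_{\reg}$ from Theorem~\ref{THM: Main} and reduce the whole statement to a fact about root systems. For $I=I_{\reg}$ the implication in Theorem~\ref{THM: Main} says: whenever $\alpha\in\Root^+$ has all its coefficients $c^\alpha_j$ even, then all $c^\alpha_j$ vanish, i.e.\ $\alpha=\sum_j c^\alpha_j\alpha_j=0$; but this is impossible for a root, so $I_{\reg}$ is admissible if and only if \emph{no positive root of $\Root$ has all its simple-root coefficients even}. This is what I would prove, using that $\Root$ is reduced.

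First I would fix a $W$-invariant inner product $\langle\,\cdot\,,\cdot\,\rangle$ on $\lieA^*$ normalized so that in each irreducible component of $\Root$ the shortest roots have squared length $2$. Since $\Root$ is reduced, the only possible squared root lengths are then $2$, $4$ and $6$ (length ratios $1,\sqrt2,\sqrt3$). Because the Cartan integers $2\langle\alpha_i,\alpha_j\rangle/\langle\alpha_j,\alpha_j\rangle$ are integers and $\langle\alpha_j,\alpha_j\rangle\in\{2,4,6\}$, we get $\langle\alpha_i,\alpha_j\rangle\in\Z$ for all $i,j$. Hence for any $\mu=\sum_j a_j\alpha_j$ in the root lattice $Q$ (so $a_j\in\Z$),
$$
\langle\mu,\mu\rangle=\sum_j a_j^2\,\langle\alpha_j,\alpha_j\rangle+2\sum_{i<j}a_ia_j\,\langle\alpha_i,\alpha_j\rangle\in 2\Z,
$$
since each $\langle\alpha_j,\alpha_j\rangle$ is even and the remaining terms carry an explicit factor $2$; that is, $Q$ is an \emph{even} lattice. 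Now if some $\alpha=\sum_j c^\alpha_j\alpha_j\in\Root^+$ had all $c^\alpha_j$ even, then $\beta:=\tfrac12\alpha$ would be a nonzero element of $Q$ with $\langle\alpha,\alpha\rangle=4\,\langle\beta,\beta\rangle\in 8\Z$, contradicting $\langle\alpha,\alpha\rangle\in\{2,4,6\}$. This finishes the argument.

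I expect the only genuinely delicate point to be the normalization when $\Root$ is not irreducible, where a $W$-invariant inner product is not unique (the components may be rescaled independently): one fixes this component by component as above, or — equivalently and perhaps more cleanly — one first restricts attention to the irreducible component containing the offending root $\alpha$, observing that only simple roots from that component occur in $\alpha$, so $\beta$ still lies in that component's (even) root lattice. It is precisely the hypothesis that $\Root$ is reduced that keeps the squared root lengths below $8$; for a non-reduced system $\alpha$ and $2\alpha$ are both roots and $2\alpha$ has all coefficients even, so the statement — and with it admissibility of $I_{\reg}$ — genuinely fails there.
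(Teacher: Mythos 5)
Your argument is correct, and it takes a genuinely different route from the paper at the decisive step. Both proofs reduce the Proposition, via Theorem~\ref{THM: Main}, to the same root-theoretic fact: in a reduced root system no root has all of its simple-root coefficients even (equivalently, Lemma~\ref{LEM: Reduced root system}). The paper proves this fact by passing to the inverse root system $\check{\Root}$ and the unit lattice $\check{\Lambda}=\mathrm{span}_{2\pi\Z}(\Root)$ of the associated simply connected compact group, then invoking the fact that any root of a reduced system can be completed to a simple system (Bourbaki, Prop.~15); in essence, a root is part of a $\Z$-basis of the root lattice and hence cannot lie in twice that lattice. You instead normalize a Weyl-invariant inner product component by component so that short roots have squared length $2$, observe that reducedness forces all squared root lengths into $\{2,4,6\}$ (the standard two-lengths fact with ratio $\sqrt2$ or $\sqrt3$) and that integrality of the Cartan numbers makes the root lattice even, and then rule out an all-even root $\alpha$ because $\tfrac12\alpha$ would be a nonzero lattice vector, forcing $\langle\alpha,\alpha\rangle\in 8\Z$. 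Your treatment of the reducible case (restricting to the irreducible component containing $\alpha$) is the right fix for the normalization ambiguity. What each approach buys: the paper's argument needs no metric normalization and no knowledge of root lengths, but relies on the less elementary completion-to-a-simple-system fact and the dual-lattice formalism; yours is a self-contained quadratic-form computation from Cartan-integer integrality, and it makes visible exactly where reducedness is used, since in type $BC$ the roots $2e_i$ have all coefficients even and squared length $8$, so both the lattice bound and admissibility of $I_{\reg}$ fail there, in agreement with the paper's classification.
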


\begin{proof}
Let $\alpha=\sum\limits_{j=1}^rc^\alpha_j\alpha_j$ be a positive root. By Lemma \ref{LEM: Reduced root system} there 
exists $k\in\{1,\dots ,r\}$ such that $c^\alpha_k$ is odd. The claim now follows from Theorem \ref{THM: Main}.
\end{proof}

\begin{lem}
\label{LEM: Reduced root system}
 Let $\Root$ be a reduced root system, let $\Sigma=\{\alpha_1,\dots,\alpha_r\}$ be a system of simple roots of $\Root$ and 
let $\alpha=\sum\limits_{j=1}^r c^\alpha_j\alpha_j$ be an element of $\Root.$ Then there exists $j\in\{1,\dots, r\}$ such that 
 $c^\alpha_j$ is odd.
\end{lem}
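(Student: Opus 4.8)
The plan is to reduce the coefficients of $\alpha$ modulo $2$ inside the root lattice and to exploit the action of the Weyl group $W$ of $\Root$. First I would fix a $W$-invariant inner product $(\cdot,\cdot)$ on $\lieA^*$ (which exists since $W$ is finite), write $\langle\beta,\gamma^\vee\rangle:=2(\beta,\gamma)/(\gamma,\gamma)\in\Z$ for the Cartan integers, and introduce the root lattice $Q:=\Z\alpha_1\oplus\dots\oplus\Z\alpha_r$. Since every reflection acts by $s_\gamma(\beta)=\beta-\langle\beta,\gamma^\vee\rangle\gamma$ with integral $\langle\beta,\gamma^\vee\rangle$, the group $W$ preserves $Q$, hence acts by $\mathbb F_2$-linear automorphisms on $\overline Q:=Q/2Q$, an $\mathbb F_2$-vector space with basis the residues $\overline\alpha_1,\dots,\overline\alpha_r$ of the simple roots. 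The assertion of the lemma is exactly that the residue $\overline\alpha$ of $\alpha=\sum_jc^\alpha_j\alpha_j$ is nonzero in $\overline Q$, so I would argue by contradiction, assuming $\overline\alpha=0$; replacing $\alpha$ by $-\alpha$ if necessary (which does not change the parities of the $c^\alpha_j$), I may assume $\alpha\in\Root^+$.

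The key ingredient, and the only place where reducedness of $\Root$ really matters, is the classical fact that every positive root of a reduced root system is $W$-conjugate to a simple root. I would prove this by induction on the height $\mathrm{ht}(\beta):=\sum_jc^\beta_j$: if $\mathrm{ht}(\beta)=1$ then $\beta$ is simple, and if $\mathrm{ht}(\beta)\ge 2$ then $(\beta,\alpha_i)>0$ for some $i$ (otherwise $(\beta,\beta)=\sum_jc^\beta_j(\beta,\alpha_j)\le 0$), so $\beta\neq\alpha_i$ and $s_{\alpha_i}$ permutes $\Root^+\setminus\{\alpha_i\}$ --- this is where I use that no proper multiple of $\alpha_i$ is a root --- giving $s_{\alpha_i}(\beta)\in\Root^+$ with $\mathrm{ht}(s_{\alpha_i}\beta)=\mathrm{ht}(\beta)-\langle\beta,\alpha_i^\vee\rangle<\mathrm{ht}(\beta)$; the inductive hypothesis then makes $s_{\alpha_i}(\beta)$, hence $\beta$, conjugate to a simple root. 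Note that a naive induction on $\mathrm{ht}(\alpha)$ itself does not work verbatim, since an odd coefficient of $\alpha-\alpha_i$ located in slot $i$ carries no information about $\alpha$; the detour through $\overline Q$ is what renders the bookkeeping trivial.

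Granting the conjugacy statement, write $\alpha=w(\alpha_k)$ with $w\in W$ and $k\in I_{\reg}$. Reducing modulo $2$ yields $\overline\alpha=\overline w(\overline\alpha_k)$, where $\overline w$ is the $\mathbb F_2$-linear automorphism of $\overline Q$ induced by $w$; since $\overline\alpha_k\neq 0$ and $\overline w$ is bijective, $\overline\alpha\neq 0$, contradicting $\overline\alpha=0$. Hence some coefficient $c^\alpha_j$ is odd, as claimed. I expect the conjugacy lemma to be the only genuine step; it is standard, but reducedness must really be invoked there, as the non-reduced system $BC_1$ illustrates --- its long root $2\alpha_1$ has only even coefficients and is not $W$-conjugate to $\alpha_1$, which is a reassuring check that the hypothesis is being used in the right place.
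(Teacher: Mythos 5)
Your proof is correct, and it takes a genuinely different route from the paper's. You reduce the root lattice $Q$ modulo $2$, use that the Weyl group $W$ acts by $\mathbb{F}_2$-linear automorphisms on $Q/2Q$ so that the lemma becomes the statement $\overline{\alpha}\neq 0$, and then the only substantive input is the classical fact that in a reduced system every root is $W$-conjugate to a simple root, which you prove by the standard height induction (and you correctly locate the use of reducedness in the fact that $s_{\alpha_i}$ permutes $\Root^+\setminus\{\alpha_i\}$). The paper instead invokes Bourbaki's result that a reduced system admits a simple system $\{\beta_1,\dots,\beta_r\}$ with $\beta_1=\alpha$, and derives the contradiction from the basis-independence of the lattice $\mathrm{span}_{2\pi\Z}(\alpha_1,\dots,\alpha_r)=\mathrm{span}_{2\pi\Z}(\Root)$, which it identifies Lie-theoretically as the unit lattice of the simply connected compact group attached to the inverse root system $\check{\Root}$. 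The two arguments are close cousins: membership of $\alpha$ in some simple system and $W$-conjugacy of $\alpha$ to a simple root are essentially equivalent facts, and both proofs ultimately exploit that parities of coordinates are preserved under a change of $\Z$-basis of $Q$. What yours buys is a purely root-theoretic, self-contained argument (no compact groups, no unit lattices, and the conjugacy fact proved rather than quoted); what the paper's buys is brevity modulo the references to Bourbaki and Loos, in a form consistent with the lattice computations used elsewhere in the article. Your $BC_1$ sanity check is also apt, matching the paper's observation that type $BC_r$ admits no admissible subsets.
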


\begin{proof}
One can check this statement by inspecting all irreducible reduced root systems (see e.g.\
 \cite[Planche I--IX, pp.\ 250--275]{Bourbaki}), but we prefer a conceptional argument.\par
 Let $\check{\Root}$ denote the system of inverse roots of $\Root$ which is also reduced. 
 Then $\Root$ is the inverse root system of 
 $\check{\Root},$ that is $\Root=\check{\check{\Root}}.$ Let $\check{\lieG}$ denote the compact semi-simple 
 real Lie algebra with root system 
 $\check{\Root}$ and let $\check{\lieT}$ be a maximal abelian subspace 
 of $\check{\lieG}.$ Then $\check{\Root}$ is isomorphic to the root system of $\check{\lieG}$ corresponding to 
$\check{\lieT}$ and $\Root$ is isomorphic to its inverse root system. The unit lattice $\check{\Lambda}\subset\check{\lieT}$ 
of the simply connected compact real Lie group $\check{\LieG}$ with Lie algebra 
$\check{\lieG}$
is (see e.g.\ \cite[p.\ 25]{L-II})
$$\check{\Lambda}=\mathrm{span}_{2\pi\Z}(\check{\check{\Root}})=\mathrm{span}_{2\pi\Z}(\Root)
=\mathrm{span}_{2\pi\Z}(\alpha_1,\dots,\alpha_r).$$
Assume now that there exists a root $\alpha=\sum\limits_{j=1}^r c^\alpha_j\alpha_j\in\Root$ with $c^\alpha_j$ even for 
all $j\in\{1,\dots, r\}.$ Then
$\pi\alpha$ lies in $\check{\Lambda}.$ But, since $\Root$ is reduced, we can  find a simple root system 
$\{\beta_1,\dots, \beta_r\}$ of $\Root$ with $\beta_1=\alpha$ (see e.g.\ \cite[Prop.\ 15, p.\ 154]{Bourbaki}).  Then 
$\pi\beta_1=\pi\alpha\in \check{\Lambda}=\mathrm{span}_{2\pi\Z}(\Root)=\mathrm{span}_{2\pi\Z}(\beta_1,\dots,\beta_r),$
 a contradiction.
\end{proof}

%%%%%%

\subsection{Orbits of sums of extrinsic symmetric elements}

We call an element $\xi_j,\; j\in\{1,\dots, r\},$ \emph{extrinsic symmetric}, if 
$\alpha_j$ has coefficient one in the highest root. 

\begin{prop}
\label{PROP: sum extr sym}
If for all $i\in I$ the element $\xi_i$ is extrinsic symmetric, that is 
  $\xi_I$ is the sum of extrinsic symmetric elements, then
$I$ is admissible.
\end{prop}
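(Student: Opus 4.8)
The plan is to apply Theorem \ref{THM: Main}. So suppose $\xi_i$ is extrinsic symmetric for every $i\in I$, i.e.\ the coefficient of $\alpha_i$ in the highest root $\tilde\alpha=\sum_{j=1}^r m_j\alpha_j$ satisfies $m_i=1$ for all $i\in I$. Let $\alpha=\sum_{j=1}^r c^\alpha_j\alpha_j\in\Root^+$ be a positive root with the property that $c^\alpha_i$ is even for every $i\in I$; we must show that then $c^\alpha_i=0$ for every $i\in I$. Since ``even'' includes $0$, it suffices to rule out the case $c^\alpha_i\ge 2$ for some $i\in I$. The key inequality to exploit is the standard fact that every positive root is dominated coefficient-wise by the highest root: $c^\alpha_j\le m_j$ for all $j\in\{1,\dots,r\}$ (for irreducible $\Root$; in general one works component by component, and $\xi_i$ being extrinsic symmetric forces $i$ to sit in an irreducible factor whose highest root has $\alpha_i$-coefficient one). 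Hence for $i\in I$ we get $c^\alpha_i\le m_i=1$, so the only even possibility is $c^\alpha_i=0$. This is exactly the implication required by Theorem \ref{THM: Main}, and admissibility of $I$ follows.

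The only point needing care is the reduction to the irreducible case and the justification that $c^\alpha_i\le m_i$. First I would decompose $\Root=\Root^{(1)}\sqcup\cdots\sqcup\Root^{(s)}$ into irreducible components, with the simple system $\Sigma$ splitting accordingly; a positive root $\alpha$ lies in a single component $\Root^{(t)}$, so $c^\alpha_j=0$ unless $\alpha_j$ belongs to that component, and the condition in Theorem \ref{THM: Main} can be checked one component at a time. Within an irreducible component, the highest root $\tilde\alpha$ exists and is the unique maximal element of $\Root^+$ in the dominance order $\alpha\preceq\beta\iff\beta-\alpha\in\sum_j(\N\cup\{0\})\alpha_j$; this is classical (see e.g.\ \cite[Chap.\ VI]{Bourbaki}). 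Consequently $\tilde\alpha-\alpha$ has all coefficients in $\N\cup\{0\}$, which gives $c^\alpha_j\le m_j$ for every $j$. Note also that if $\xi_i$ is extrinsic symmetric then $m_i=1$ for the highest root of the component containing $\alpha_i$, and this highest root has $\alpha_i$-coefficient $1$ only when... — in fact no extra hypothesis is needed here, the definition already fixes $m_i=1$ in that component.

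The main obstacle I anticipate is purely bookkeeping: making sure the ``even coefficient'' hypothesis in Theorem \ref{THM: Main} is tested against the \emph{same} positive root across all indices $i\in I$, and that the dominance bound is applied within the correct irreducible factor — an index $i\in I$ and a root $\alpha$ with $c^\alpha_i\ne 0$ automatically lie in the same factor, so no cross-term difficulties arise. Everything else is immediate once the inequality $c^\alpha_i\le m_i=1$ is in hand; there is no delicate computation, only the invocation of the highest-root domination property and of Theorem \ref{THM: Main}. I would therefore keep the written proof to a few lines: cite the domination of positive roots by the highest root, observe that extrinsic symmetry gives $m_i=1$, conclude $c^\alpha_i\in\{0,1\}$ so that evenness forces $c^\alpha_i=0$, and appeal to Theorem \ref{THM: Main}.
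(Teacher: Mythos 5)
Your argument is correct and is essentially the paper's own proof: both deduce from the highest-root domination $c^\alpha_i\le m_i=1$ that $c^\alpha_i\in\{0,1\}$ for all $i\in I$, so evenness forces $c^\alpha_i=0$, and then invoke Theorem \ref{THM: Main}. The extra care you take (reduction to irreducible components and justifying the domination bound) is sound but not a different route, merely a more detailed write-up of the same two-line argument.
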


\begin{proof}
Let
  $\alpha=\sum\limits_{j=1}^rc^\alpha_j\alpha_j$ be a positive root.
  Since for all $i\in I$ the simple root
  $\alpha_i$ has coefficient 1 in the highest root, we have 
  $c^\alpha_i\in\{0,1\}$ for all $i\in I.$   Our claim now follows from Theorem \ref{THM: Main}.
\end{proof}

If $I=\{j\}$ and  $\xi_I=\xi_j$ is extrinsic symmetric, then 
$X_I$ is a symmetric $R$-space in the sense of \textsc{Takeuchi} \cite{Ta65} and \textsc{Kobayashi} 
and \textsc{Nagano} \cite{KN} (see also \cite[Lemma 2.1]{MQ}).

\section{Classification}\label{SEC: Classification}
Theorem \ref{THM: Main} allows us
to classify all admissible subsets $ I$ of $I_{\reg}$
in terms of the root system $\Root$ of $P,$ which we may assume to be irreducible.
We proceed case-by-case by the type of $\Root.$
The description of the irreducible reduced root systems is taken from \cite[Planche I--IX, pp.\ 250--275]{Bourbaki}.

\subsection{Type $A_r$}
Since each simple root has coefficient 1 in the highest root, every non-empty subset
$I$ of  $I_{\reg}$ is admissible
by Proposition \ref{PROP: sum extr sym}.

\subsection{Type $B_r,\; r\geq 2$}
From \cite[Planche II, pp.\ 252--253]{Bourbaki} we see that 
$\Root^+=\{e_j\; |\;  1\leq j\leq r\}\cup\{e_j\pm e_k\; |\;  1\leq j<k\leq r\}$ with simple roots
$\Sigma=\{\alpha_1=e_1-e_2, \, \dots,\,  \alpha_{r-1}=e_{r-1}-e_r,\, \alpha_r=e_r\}.$
We have 
\begin{itemize}
 \item $e_j=\sum\limits_{k=j}^r \alpha_k=\alpha_j+\dots +\alpha_r$ for $1\leq j\leq r.$
 \item $e_j-e_k=\sum\limits_{l=j}^{k-1} \alpha_l=\alpha_j+\dots+\alpha_{k-1}$ for  $1\leq j<k\leq r.$
 \item $e_j+e_k=\sum\limits_{l=j}^{k-1} \alpha_l+2\sum\limits_{l=k}^{r} \alpha_l
 =\alpha_j+\dots+\alpha_{k-1}+2\alpha_k+\dots+2\alpha_r$ for  $1\leq j<k\leq r.$
\end{itemize}
Looking at the highest root $\delta=e_1+e_2=\alpha_1+2\alpha_2+\dots +2\alpha_r$ 
one sees that every admissible subset $I$
must contain 1. Moreover, take $k\in\{1,\dots, r-1\}$ and look at the root 
$e_k+e_{k+1}=\alpha_k+2\alpha_{k+1}+\dots +2\alpha_r$
we see that if $k\notin I$ but $I\cap\{k+1,\dots, r\}\neq \emptyset,$ then 
$I$ is not admissible. 
Thus the possible candidates for admissible subsets $I$ are of the form
$$I=\{1,2,...,k\}$$
for some $k\in\{1,\dots, r\}.$ One easily checks that they  are indeed admissible.

\subsection{Type $C_r,\; r\geq 2$}
From \cite[Planche III, pp.\ 254--255]{Bourbaki} we see that 
$\Root^+=\{2e_j\; |\;  1\leq j\leq r\}\cup\{e_j\pm e_k\; |\;  1\leq j<k\leq r\}$ with simple roots
$\Sigma=\{\alpha_1=e_1-e_2, \, \dots,\,  \alpha_{r-1}=e_{r-1}-e_r,\, \alpha_r=2e_r\}.$
We have 
\begin{itemize}
 \item $2e_j=2\sum\limits_{k=j}^{r-1} \alpha_k+\alpha_r=2\alpha_j+\dots +2\alpha_{r-1}+\alpha_r$ 
 for $1\leq j\leq r.$
 \item $e_j-e_k=\sum\limits_{l=j}^{k-1} \alpha_l=\alpha_j+\dots+\alpha_{k-1}$ for  $1\leq j<k\leq r.$
 \item $e_j+e_k=\sum\limits_{l=j}^{k-1} \alpha_l+2\sum\limits_{l=k}^{r-1} \alpha_l+\alpha_r$ for  $1\leq j<k\leq r.$
\end{itemize}
Looking at the highest root $\delta=2e_1=2\alpha_1+\dots +2\alpha_{r-1}+\alpha_r$ 
one sees that every admissible subset $I$
must contain $r.$ One easily checks that $I$ is admissible if and only if $r\in I.$

\subsection{Type $D_r,\; r\geq 4$}
From \cite[Planche IV, pp.\ 256--257]{Bourbaki} we see that 
$\Root^+=\{e_j\pm e_k\; |\;  1\leq j<k\leq r\}$ with simple roots
$\Sigma=\{\alpha_1=e_1-e_2, \, \dots,\,  \alpha_{r-1}=e_{r-1}-e_r,\, \alpha_r=e_{r-1}+e_r\}.$
We have 
\begin{itemize}
 \item $e_j-e_k=\sum\limits_{l=j}^{k-1} \alpha_l$ for  $1\leq j<k\leq r.$
 \item $e_j+e_k=\sum\limits_{l=j}^{k-1} \alpha_l+2\sum\limits_{l=k}^{r-2} \alpha_l
 +\alpha_{r-1}+\alpha_r$ for  $1\leq j<k\leq r-1.$
  \item $e_j+e_r=\sum\limits_{l=j}^{r-2} \alpha_l+\alpha_r$ for  $1\leq j\leq r-1.$
\end{itemize}
Looking at the highest root 
$\delta=e_1+e_2=\alpha_1+2\alpha_2+\dots +2\alpha_{r-2}+\alpha_{r-1}+\alpha_r$ 
one sees that subsets of $\{2,\dots, r-2\}$ are not admissible.
Therefore $I\cap\{1,r-1,r\}$ is non-empty if $I$ is admissible.
One easily checks that if $I\cap\{r-1,r\}$ is non-empty, then $I$ is admissible.\par
We are left with the cases where $1\in I\subset\{1,\dots, r-2\}.$ 
Take $k\in\{1,\dots, r-3\}$ and look at the root 
$e_k+e_{k+1}=\alpha_k+2\alpha_{k+1}+\dots +2\alpha_{r-2}+\alpha_{r-1}+\alpha_r$
we see that if $k\notin I\subset\{1,\dots, r-2\}$ but $I\cap\{k+1,\dots, r-2\}\neq \emptyset,$ then 
$I$ is not admissible. 
Thus the possible candidates for admissible subsets $I$ with $1\in I\subset\{1,\dots, r-2\}$ are of the form
$$I=\{1,2,...,k\}$$
for some $k\in\{1,\dots, r-2\}.$ One easily checks that they are indeed  admissible.

\subsection{Type $E_r,\; r\in\{6,7,8\}$}
The simple roots, their numbering and the coefficients of the positive roots are taken from
\cite[Planche V-VII, pp.\ 260--270]{Bourbaki}. We observe:
\begin{enumerate}[(i)]
\item The positive root $\alpha_1+\alpha_2+\alpha_r+2\sum\limits_{j=3}^{r-1}\alpha_j$ 
 shows that if $I$ is admissible for $E_r, \; r\in\{6,7,8\},$ then
$I\cap \{1,2,r\}\neq\emptyset.$  
\item Positive roots of the root system of type $E_r,\; r\in\{6,7\},$ 
induce positive roots of $E_{r+1}$ whose coefficients in 
the simple root
$\alpha_{r+1}$ vanish and vice-versa. Therefore if $I\subset\{1,2,\dots, r+1\}$ is admissible for the type $E_{r+1}$ 
and $I\setminus\{r+1\}$ is non-empty, then
$I\setminus\{r+1\}$ must be admissible for the type $E_r.$
\item Positive roots whose coefficients in the simple roots are elements of $\{0,1\}$  do not give any restriction for admissible 
 subsets.
 
\end{enumerate}

\subsubsection{Type $E_6$}
Inspecting the coefficients of the positive roots  in the case $E_6$ we see:
\begin{itemize}
 \item If $I$ contains $1,$ then $I$ is admissible unless $I$ is equal to $\{1,4\},\; \{1,5\},\; \{1,4,5\}$ or $\{1,4,6\}.$
 \item If $I$ contains $2,$ then $I$ is admissible unless $I$ is a subset of $\{2,3,5\}.$ 
 \item If $I$ contains $6,$ then $I$ is admissible unless $I$ is equal to $\{3,6\},\; \{4,6\},\; \{1,4,6\}$ or $\{3,4,6\}.$
\end{itemize}

\subsubsection{Type $E_7$}
Inspecting the coefficients of the positive roots  in the case $E_7$ and the admissible 
sets for $E_6$ we see:
\begin{itemize}
\item Since $c^\delta_7=1$ for highest root $\delta$  we see that $I=\{7\}$ is admissible. Moreover, 
if $I\setminus\{7\}$ is non-empty, then $I$ is admissible for $E_7$ if and only if 
$I\setminus\{7\}$ is admissible for $E_6.$
\item If $I$ contains $1$ but not $7,$ then $I$ is admissible unless 
$I$ is a subset of $\{1,2,4,6\}$ or $\{1,4,5,6\}.$
 \item If $I$ contains $2$ and $I\cap\{1,7\}=\emptyset,$ then $I$ is admissible unless
$I$ is a subset of $\{2,3,4,6\}$ or $\{2,3,5,6\}.$ 
\end{itemize}

\subsubsection{Type $E_8$}
Inspecting the coefficients of the positive roots  in the case $E_8$ and the admissible 
sets for $E_7$ we see:
\begin{itemize}
 \item  If $I$ contains $8,$ then $I$ is admissible unless $I$ is a subset of $\{1,3,4,6,8\}$	
 or $I\setminus\{8\}$ is not admissible for $E_7.$
 \item If $I$ contains  $1$ but not $8,$ then $I$ is admissible unless  
 $I$ is a subset of $\{1,2,3,5,7\},$   $\{1,2,4,5,7\},$
      $\{1,2,4,6,7\},$ $\{1,3,4,5,7\},$ $\{1,3,4,6,7\}$ or $\{1,4,5,6,7\}.$ 
 \item If $I$ contains  $2$ and $\{1,8\}\cap I=\emptyset,$ then $I$ is admissible
 unless $I$ is a subset of 
 $\{2,3,4,5,7\},$ $\{2,3,4,6,7\}$ or $\{2,3,5,6,7\}.$
 \end{itemize}

\subsection{Type $F_4$}
The simple roots, their numbering and the coefficients of the positive roots are taken from
\cite[Planche VIII, pp.\ 272--273]{Bourbaki}. 
The highest root $2\alpha_1+3\alpha_2+4\alpha_3+2\alpha_4$
and the root $\alpha_1+2\alpha_2+2\alpha_3+2\alpha_4$ show
that every admissible subset $I$ must contain $\{1,2\}.$ 
Moreover
all positive roots $\alpha$ having at least one coefficient $\geq 2$
satisfy $c^\alpha_1$ odd or $c^\alpha_2$ odd. Thus every subset $\{1,2\}\subset I\subset 
\{1,2,3,4\}$ is admissible. 

\subsection{Type $G_2$}
The simple roots, their numbering and the coefficients of the positive roots are taken from
\cite[Planche IX, pp.\ 274--275]{Bourbaki}.
We have
$\Root^+=\{\alpha_1,  \, \alpha_2, \,  \alpha_1+\alpha_2,  \, 
2\alpha_1+\alpha_2,  \,  3\alpha_1+\alpha_2,  \, 3\alpha_1+2\alpha_2\}.$ 
Thus only $I=I_{\reg}=\{1,2\}$ is admissible.

\subsection{Type $BC_r$}
Since each simple root has coefficient 2 in the highest root  (see \cite[p.\ 476]{He}), there are 
\emph{no}  admissible subsets in this case.

%%%%%%%%
\section{Maximal antipodal sets}\label{SECT: Maximal antipodal sets}

Let  $I$ be an admissible subset of $\{1,\dots, r\},$ that is 
$(\LieK_0,\LieH_I,\Gamma^I)$ is a $\Gamma^I$-symmetric triple and 
$\LieK_0/\LieH_I$ is a $\Gamma^I$-symmetric space by  \eqref{EQ: Lutz Gamma}.
Using the equivariant identification
$$
 \iota: \LieK_0/\LieH_I\to X_I:=\Ad_\LieG(\LieK_0)\xi_I,\quad 
 k\LieH_I\mapsto \Ad_\LieG(k)\xi_I
$$
we transfer the $\LieK_0$-equivariant $\Gamma^I$-symmetric structure from $\LieK_0/\LieH_I$ to $X_I$
by 
$$\gamma_{\iota(x)}(\iota(y)):=\iota(\gamma_x(y))$$
for all $x,y\in \LieK_0/\LieH_I$ and for all $\gamma\in \Gamma^I.$
In particular we have for $k\in\LieK$ and $i\in I$ using \eqref{EQ: k-j}
$$\gamma^i_{\xi_I}(\Ad_\LieG(k)\xi_I)=
\Ad_\LieG(\gamma^i(k))\xi_I=
\Ad_\LieG(k_ikk_i^{-1})\xi_I=\Ad_\LieG(k_i)\Ad_\LieG(k)\xi_I,$$
since 
$\Ad_\LieG(k_i^{-1})\xi_I=\Ad_\LieG(\exp(-\pi\xi_i))\xi_I=e^{-\pi\ad(\xi_i)}\xi_I=\xi_I$ as 
$[\xi_i,\xi_I]=0.$
We observe  that the map $\gamma^i_{\xi_I}$
extends  to the linear endomorphism $\Ad_\LieG(k_i)|_\lieP$ of $\lieP.$ 
With $$\Gamma^I_{\xi_I}:=\{\gamma_{\xi_I}\; |\; \gamma\in\Gamma^I\}=\langle \gamma^i_{\xi_I}:
i\in I\rangle\subset 
\mathrm{Diff}(X_I)$$ and 
$$\Gamma^I_{\xi_I,\lieP}:=\langle \Ad_\LieG(k_i)|_\lieP: i\in I\rangle\subset 
\mathrm{End}(\lieP),$$ the group generated by the elements 
$\Ad_\LieG(k_i)|_\lieP$ with $i\in I,$
we get by Theorem \ref{THM: Main} 
\begin{eqnarray*}
 \Fix(\Gamma^I_{\xi_I,\lieP}, \lieP)&=&\{Y\in \lieP\; |\;   \Ad_\LieG(k_i)(Y)=Y\; \text{for all}\; 
i\in I\}\\
&=& \lieA\oplus \sum\limits_{\substack{\alpha\in\Root^+\\ \forall i\in I:\; 2|\alpha(\xi_i)}}\lieP_\alpha
\;=\; \lieA\oplus \sum\limits_{\substack{\alpha\in\Root^+\\ \forall i\in I:\; \alpha(\xi_i)=0}}\lieP_\alpha
\;=\; \lieA\oplus \sum\limits_{\substack{\alpha\in\Root^+\\  \alpha(\xi_I)=0}}\lieP_\alpha\\
&=& \{Y\in\lieP\; |\;  [Y,\xi_I]=0\}
\end{eqnarray*}
since $I$ is admissible. Moreover
$$\Fix(\Gamma^I_{\xi_I}, X_I)=\{y\in X_I\; |\;  \gamma(y)=y\; \text{for all}\; 
\gamma\in \Gamma^I_{\xi_I}\}=\Fix(\Gamma^I_{\xi_I,\lieP}, \lieP)\cap X_I.$$
Since the $\Gamma^I$-symmetric structure is $\LieK_0$-equivariant, we have 
\begin{equation}
\label{EQ: abelian 2}
\begin{array}{lll}
\Fix(\Gamma^I_{\Ad_\LieG(k)\xi_I}, X_I)
&=&\Ad_\LieG(k)\big(\Fix(\Gamma^I_{\xi_I}, X_I)\big)\\
&=&\Ad_\LieG(k)\big( \{Y\in\lieP\; |\;  [Y,\xi_I]=0\}\big)\cap X_I\\
&=&\{Z\in\lieP\; |\;  [Z,\Ad_\LieG(k)\xi_I]=0\}\cap X_I.
 \end{array}
\end{equation}

For Riemannian symmetric spaces the notion of an antipodal set has been introduced by \textsc{Chen} and \textsc{Nagano}
\cite{CN}. This notion naturally extends to $\Gamma$-symmetric spaces. 
A subset $\Antipodal$ of $X_I$ is called \emph{antipodal} (for $\Gamma^I$), if
\begin{equation}
\label{EQ: Antipodal}
  \forall a,\tilde{a}\in \Antipodal \quad \forall \gamma\in\Gamma^I: \quad
 \gamma_a(\tilde{a})=\tilde{a}.
\end{equation}
An antipodal subset $\Antipodal$ of $X_I$ is called \emph{maximal}, if $\Antipodal$ is not a proper subset of another 
antipodal subset of $X_I.$ 
Let $\Antipodal$ be a maximal antipodal subset of $X_I\subset\lieP$ with $\xi_I\in\Antipodal.$ Then the linear subspace
$\mathrm{span}_\R(\Antipodal)$ of $\lieP$
is abelian by \eqref{EQ: abelian 2}. Let $\tilde{\lieA}$ be a maximal abelian subspace of $\lieP$ containing 
$\mathrm{span}_\R(\Antipodal).$ Since $X_I\cap \tilde{\lieA}$ is an antipodal subset of $X_I$ by \eqref{EQ: abelian 2}, 
we have 
$$\Antipodal =X_I\cap \tilde{\lieA}$$
by maximality. It is well known that the intersection of an orbit of the identity component of the isotropy group 
of a Riemannian symmetric space $P$ of compact type with a maximal abelian subspace $\tilde{\lieA}\subset\lieP$ is the orbit 
of the Weyl group $W(P,\tilde{\lieA})$ of $P$ corresponding to $\tilde{\lieA}.$ Thus we have 
$$\Antipodal =W(P,\tilde{\lieA})\xi_I.$$
Since the $\Gamma^I$-symmetric structure on $X_I$ is $\LieK_0$-equivariant and since $\LieK_0$ acts transitively on the set 
of maximal abelian subspaces of $\lieP,$ we can summarize:
\begin{thm}
\label{THM: antipodal}
 Every maximal antipodal subset of $X_I$ is of the form 
 $X_I\cap \lieA'$ for some maximal abelian subspace $\lieA'$ in $\lieP$ and therefore 
 an orbit of the corresponding Weyl group $W(P,\lieA').$ Moreover, any two maximal 
 antipodal subsets of $X_I$ are conjugate by an element of $\LieK_0.$
\end{thm}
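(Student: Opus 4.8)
The plan is to carry out the argument sketched in the paragraph preceding the statement, keeping careful track of where admissibility of $I$ is used. First I would reduce to the case $\xi_I\in\Antipodal$: a maximal antipodal subset $\Antipodal\subset X_I$ is non-empty because every singleton in $X_I$ is antipodal by \eqref{EQ: abelian 2}, so I may pick a point $\Ad_\LieG(k)\xi_I\in\Antipodal$ with $k\in\LieK_0$. By the $\LieK_0$-equivariance \eqref{EQ: equivariant Gamma structure} the map $\Ad_\LieG(k^{-1})|_{X_I}$ is a bijection of $X_I$ sending antipodal subsets to antipodal subsets and preserving maximality, hence $\Ad_\LieG(k^{-1})\Antipodal$ is a maximal antipodal subset containing $\xi_I$; since the form of the conclusion is preserved under the inverse map $\Ad_\LieG(k)|_{X_I}$ (which permutes the maximal abelian subspaces of $\lieP$), it is enough to treat this case.

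Next I would check that $\Antipodal$ spans an abelian subspace of $\lieP$. By the definition \eqref{EQ: Antipodal} of antipodality, for all $a,\tilde a\in\Antipodal$ we have $\tilde a\in\Fix(\Gamma^I_a,X_I)$, and by \eqref{EQ: abelian 2} this is precisely the condition $[\tilde a,a]=0$. This is the one place where admissibility of $I$ is needed: it is Theorem \ref{THM: Main} that makes $\Fix(\Gamma^I_a,X_I)$ equal to the commutant of $a$ in $\lieP$ rather than to the a priori larger subset attached to $\Root^{\mathrm{ev}}_I$. Consequently the elements of $\Antipodal$ commute pairwise and $\mathrm{span}_\R(\Antipodal)$ is abelian.

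I would then enlarge $\mathrm{span}_\R(\Antipodal)$ to a maximal abelian subspace $\lieA'\subset\lieP$. As any two elements of $X_I\cap\lieA'$ commute, \eqref{EQ: abelian 2} shows $X_I\cap\lieA'$ is antipodal; it contains $\Antipodal$, so $\Antipodal=X_I\cap\lieA'$ by maximality. To conclude the first assertion I would cite the classical fact that, for a Riemannian symmetric space $P$ of compact type, the intersection of a $\LieK_0$-orbit in $\lieP$ with a maximal abelian subspace is a single orbit of the associated Weyl group; applied to $X_I=\Ad_\LieG(\LieK_0)\xi_I$ this identifies $X_I\cap\lieA'$ with a $W(P,\lieA')$-orbit. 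For the conjugacy statement I would use that $\LieK_0$ acts transitively on maximal abelian subspaces of $\lieP$ and preserves $X_I$: given $\Antipodal_1=X_I\cap\lieA_1'$ and $\Antipodal_2=X_I\cap\lieA_2'$, choosing $k\in\LieK_0$ with $\Ad_\LieG(k)\lieA_1'=\lieA_2'$ yields $\Ad_\LieG(k)\Antipodal_1=X_I\cap\lieA_2'=\Antipodal_2$.

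I do not anticipate a real obstacle: all the substance is already contained in \eqref{EQ: abelian 2}. The only step needing a little care is verifying that $X_I\cap\lieA'$ is genuinely antipodal — not merely that $\Antipodal$ lies in an abelian subspace — which is immediate from \eqref{EQ: abelian 2} once one recalls that every point of $X_I$ has the form $\Ad_\LieG(k)\xi_I$. The remaining ingredients, namely the Weyl-orbit description of the intersection of an isotropy orbit with a flat and the transitivity of $\LieK_0$ on maximal abelian subspaces, are standard facts from the structure theory of symmetric spaces of compact type.
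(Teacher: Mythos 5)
Your proposal is correct and follows essentially the same route as the paper's own argument: reduce via $\LieK_0$-equivariance to the case $\xi_I\in\Antipodal$, use \eqref{EQ: abelian 2} (where admissibility enters through Theorem \ref{THM: Main}) to see that $\mathrm{span}_\R(\Antipodal)$ is abelian, extend to a maximal abelian subspace, conclude $\Antipodal=X_I\cap\lieA'$ by maximality, and invoke the classical Weyl-orbit description together with the transitivity of $\LieK_0$ on maximal abelian subspaces of $\lieP$ for the conjugacy statement. Your explicit flagging of where admissibility is used, and the spelled-out reduction and conjugacy steps, only make explicit what the paper leaves implicit.
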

Theorem \ref{THM: antipodal} generalizes the result \cite[Theorem 4.3]{TT} of \textsc{Tanaka} and \textsc{Tasaki}
on maximal antipodal subsets of symmetric $R$-spaces. \par
Since a maximal antipodal subset $\Antipodal$ of $X_I$ is the orbit of a Weyl group of $P,$ 
we see from the work of  \textsc{Berndt, Console} and \textsc{Fino} \cite[p.\ 86]{BCF}, where they refer to
a result of \textsc{Sanchez} \cite{Sanchez}, that the cardinality $|\Antipodal|$ of $\Antipodal$ 
has the following topological meaning
\begin{equation}\label{EQ: card max antipodal}
|\Antipodal|=\mathrm{dim}\, H_*(X_I,\Z_2).
\end{equation}
The maximal cardinality of an antipodal 
subset of a compact Riemannian symmetric space is called its \emph{$2$-number.} This invariant was introduced by 
\textsc{Chen} and \textsc{Nagano} in \cite{CN}.
Equation \eqref{EQ: card max antipodal} generalizes a result of \textsc{Takeuchi} \cite{Ta89} on the $2$-number of 
symmetric $R$-spaces.

%%%%%%%%
\section{Maximality of $\Gamma^I$}\label{SECT: Maximality}

For a subset $J$ of $\{1,\dots, r\}$  we set $k_J:=\prod_{j\in J}k_j=\exp(\pi \xi_J)\in\LieK$
(see \eqref{EQ: k-j}) and
$$\gamma^J:=\prod_{j\in J}\gamma^j=\Int(k_J)\in\Aut(\LieK_0)$$
with the understanding that $k_\emptyset=e$ and $\xi_\emptyset=0.$
If $J\neq J',$ then $\gamma^J\neq \gamma^{J'}.$ Indeed, we may assume that $J\setminus J'\neq\emptyset.$
For $j\in J\setminus J'$ we see that the differential $\gamma^J_*$ of $\gamma^J$ at the identity 
satisfies $\gamma^J_*|_{\lieK_{\alpha_j}}=\Ad_\LieG(k_J)|_{\lieK_{\alpha_j}}
=-\mathrm{id}|_{\lieK_{\alpha_j}}$ while $\gamma^{J'}_*|_{\lieK_{\alpha_j}}=\Ad_\LieG(k_{J'})|_{\lieK_{\alpha_j}}
=\mathrm{id}|_{\lieK_{\alpha_j}}.$ We conclude that for any $\beta\in\Gamma^{I_\reg}$ there exists 
a unique subset $J_\beta\subset\{1,\dots, r\}$ such that 
$\beta=\gamma^{J_\beta}.$\par
Let $I$ be a non-empty subset of $I_\reg=\{1,\dots,r\}$ and let 
$\hat{\Gamma}$ be a subgroup of $\Gamma^{I_\reg}$ 
such that $(\LieK_0,\LieH_I,\hat{\Gamma})$ is a $\hat{\Gamma}$-symmetric triple.
By condition \eqref{EQ: Gamma sym triple} this implies  in particular
$$\fix(\hat{\Gamma}, \LieK_0)=\lieH_I,$$
where  the Lie algebra of  $\Fix(\hat\Gamma,\LieK_0)$ is
\begin{eqnarray*}
 \fix(\hat{\Gamma}, \LieK_0)&=& \{X\in\lieK\; |\; \forall t\in\R\;\; \forall \hat\gamma\in\hat\Gamma:
 \hat\gamma(\exp(tX))=\exp(tX)\}\\
 &=& \{X\in\lieK\; |\; \forall t\in\R\;\; \forall \hat\gamma\in\hat\Gamma:
 \Int(k_{J_{\hat{\gamma}}})(\exp(tX))=\exp(tX)\}\\
 &=& \{X\in\lieK\; |\; \forall t\in\R\;\; \forall \hat\gamma\in\hat\Gamma:
\exp(t\Ad_\LieG(k_{J_{\hat{\gamma}}})X)=\exp(tX)\}\\
 &=&\{X\in\lieK\; |\; \forall \hat\gamma\in\hat\Gamma:
 \Ad_\LieG(k_{J_{\hat{\gamma}}})X=X\}\\
 &=& \lieM\oplus \sum\limits_{\alpha\in\hat\Root^+}\lieK_\alpha\\
\end{eqnarray*}
with $\hat\Root^+=\{\alpha\in\Root^+\;|\; \forall \hat\gamma\in\hat\Gamma: 
\alpha(\xi_{J_{\hat{\gamma}}})\; \text{even}\}.$
Recalling that $\lieH_I=\lieM\oplus \sum\limits_{\alpha\in\Root^0_I}\lieK_\alpha$
we get 
\begin{equation}
\label{EQ: hatRoot+=Root-0-I}
 \hat\Root^+=\Root^0_I
\end{equation}
where $\Root^0_I$ is defined in \eqref{EQ: Root-0-I}. 
We claim that
\begin{equation}
\label{EQ: hatGamma subset Gamma-I}
 \hat\Gamma\subset\Gamma^I.
\end{equation}
Suppose  $\hat\Gamma\not\subset\Gamma^I$ and take 
$\hat\gamma\in\hat\Gamma\setminus \Gamma^I.$ By assumption 
$J_{\hat\gamma}\setminus I$ is non-empty. We take $s\in J_{\hat\gamma}\setminus I.$
Then the simple root $\alpha_s$ lies in $\Root^0_I$ since $s\notin I,$ but $\alpha_s\notin  \hat\Root^+$
since $\alpha_s(\xi_{J_{\hat\gamma}})=\sum\limits_{j\in J_{\hat\gamma}}\alpha_s(\xi_j)=1.$
This contradicts \eqref{EQ: hatRoot+=Root-0-I}.\par
Since  $\hat\Gamma$ is contained in $\Gamma^I$ and since 
$(\LieK_0,\LieH_I,\hat{\Gamma})$ is a $\hat{\Gamma}$-symmetric triple we have
$$\Fix(\Gamma^I,\LieK_0)_0\subset\Fix(\hat\Gamma,\LieK_0)_0\subset\LieH_I.$$ 
Summing up and recalling Observation \ref{OBS: right hand inclusion} we have shown: 

\begin{prop}
 Let $I$ be a non-empty subset of $I_\reg=\{1,\dots,r\}$ and let $\hat{\Gamma}$ be a subgroup of $\Gamma^{I_\reg}.$
If $(\LieK_0,\LieH_I,\hat{\Gamma})$ is a $\hat{\Gamma}$-symmetric triple, then $\hat{\Gamma}\subset \Gamma^I$ and 
$I$ is admissible.
\end{prop}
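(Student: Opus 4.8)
The plan is to extract the one structural consequence of the $\hat\Gamma$-symmetric triple condition that is easy to compute with --- namely the Lie algebra equality $\fix(\hat\Gamma,\LieK_0)=\lieH_I$ coming from \eqref{EQ: Gamma sym triple} --- and to rephrase it in terms of roots rigidly enough to pin down $\hat\Gamma$. Since $\hat\Gamma\subset\Gamma^{I_\reg}$ and the latter is generated by the commuting involutions $\gamma^j=\Int(k_j)$ of \eqref{EQ: gamma_j}, the first step is to parametrise: every $\beta\in\Gamma^{I_\reg}$ equals $\gamma^{J}:=\Int(k_J)$, $k_J=\exp(\pi\xi_J)$, for a \emph{unique} subset $J=J_\beta\subset\{1,\dots,r\}$. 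Uniqueness is the crucial bookkeeping device and is obtained by testing $\beta_*$ on the root spaces $\lieK_{\alpha_j}$: on $\lieK_{\alpha_j}$ the operator $\Ad_\LieG(k_J)=e^{\pi\ad(\xi_J)}$ acts by $-\Id$ if $j\in J$ and by $\Id$ otherwise, because $\alpha_j(\xi_J)=\sum_{k\in J}\alpha_j(\xi_k)$ is $1$ when $j\in J$ and $0$ when $j\notin J$ by \eqref{EQ: coefficient}, and $\lieK_{\alpha_j}\neq\{0\}$. So $\hat\Gamma$ is encoded by the family $\{J_{\hat\gamma}\}_{\hat\gamma\in\hat\Gamma}$.

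Next I would compute $\fix(\hat\Gamma,\LieK_0)$. As $\Ad_\LieG(k_{J_{\hat\gamma}})$ fixes $\lieM$ and acts on $\lieK_\alpha$ by a rotation through the angle $\pi\alpha(\xi_{J_{\hat\gamma}})$, an element $X=X_\lieM+\sum_\alpha X_\alpha$ is fixed by all of $\hat\Gamma$ precisely when every $\alpha$ contributing a nonzero $X_\alpha$ satisfies $\alpha(\xi_{J_{\hat\gamma}})\in 2\Z$ for every $\hat\gamma$. Hence $\fix(\hat\Gamma,\LieK_0)=\lieM\oplus\sum_{\alpha\in\hat\Root^+}\lieK_\alpha$ with $\hat\Root^+=\{\alpha\in\Root^+:\alpha(\xi_{J_{\hat\gamma}})\text{ even for all }\hat\gamma\in\hat\Gamma\}$. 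Comparing with $\lieH_I=\lieM\oplus\sum_{\alpha\in\Root^0_I}\lieK_\alpha$ and using the linear independence of the summands, the triple condition becomes the equality of index sets $\hat\Root^+=\Root^0_I$.

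The heart of the argument is then the inclusion $\hat\Gamma\subset\Gamma^I$, proved by contradiction. If some $\hat\gamma\in\hat\Gamma$ has $J_{\hat\gamma}\not\subset I$, pick $s\in J_{\hat\gamma}\setminus I$. On the one hand $s\notin I$ forces $\alpha_s(\xi_i)=0$ for all $i\in I$, so $\alpha_s(\xi_I)=0$ and $\alpha_s\in\Root^0_I$. On the other hand $\alpha_s(\xi_{J_{\hat\gamma}})=\sum_{j\in J_{\hat\gamma}}\alpha_s(\xi_j)=1$ is odd, so $\alpha_s\notin\hat\Root^+$, contradicting $\hat\Root^+=\Root^0_I$. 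The only place care is needed is making sure that ``$s$ occurs in $J_{\hat\gamma}$ but not in $I$'' is meaningful, which is exactly what the uniqueness of $J_\beta$ in the first step guarantees; once that is in place this step is short, and I expect no further obstacle.

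Finally, admissibility of $I$ is a formal consequence. From $\hat\Gamma\subset\Gamma^I$ we get $\Fix(\Gamma^I,\LieK_0)\subset\Fix(\hat\Gamma,\LieK_0)$, hence $\Fix(\Gamma^I,\LieK_0)_0\subset\Fix(\hat\Gamma,\LieK_0)_0\subset\LieH_I$ by the left-hand inclusion in the $\hat\Gamma$-symmetric triple condition. Together with Observation \ref{OBS: right hand inclusion}, which gives $\LieH_I\subset\Fix(\Gamma^I,\LieK_0)$ unconditionally, this is precisely \eqref{EQ: Gamma sym pair}; so $(\LieK_0,\LieH_I,\Gamma^I)$ is a $\Gamma^I$-symmetric triple and $I$ is admissible. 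This last step needs nothing beyond the group inclusion already established.
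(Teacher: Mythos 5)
Your proposal is correct and follows essentially the same route as the paper: the unique parametrization $\beta=\gamma^{J_\beta}$ via the sign of the action on the simple-root spaces $\lieK_{\alpha_j}$, the computation $\fix(\hat\Gamma,\LieK_0)=\lieM\oplus\sum_{\alpha\in\hat\Root^+}\lieK_\alpha$ leading to $\hat\Root^+=\Root^0_I$, the contradiction via a simple root $\alpha_s$ with $s\in J_{\hat\gamma}\setminus I$, and the formal deduction of admissibility from $\hat\Gamma\subset\Gamma^I$ together with Observation \ref{OBS: right hand inclusion}. No gaps.
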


\begin{ex} We now give an example of an admissible subset $I$ 
and a \emph{proper} subgroup $\hat{\Gamma}$ of $\Gamma^I$
such that 
 $(\LieK_0,\LieH_I,\hat{\Gamma})$ is a $\hat{\Gamma}$-symmetric triple:\par
Let  $P$ be a bottom symmetric space whose root system $\Root$ is of type $A_r$ with $r\geq 3.$ 
Then
every non-empty subset of $\{1,\dots, r\}$ is admissible. The set of simple roots $\Sigma$ is isomorphic to 
$\{\alpha_1=e_1-e_2,\; \alpha_2=e_2-e_3,\; \dots, \; \alpha_r=e_r-e_{r+1}\}$
and the positive roots are of the form 
$e_j-e_k=\sum\limits_{j\leq s<k} \alpha_s$ with $1\leq j<k\leq r+1$ 
(see e.g.\ \cite[Planche I, p.\ 250]{Bourbaki}).\par
 Take $I=\{i_1,i_2,i_3\}$ with $1\leq i_1<i_2<i_3\leq r$ and
consider the subgroup $$\hat{\Gamma}:=\{e,\, \gamma^{i_1}\gamma^{i_3},\, \gamma^{i_2},\; \gamma^{i_1}\gamma^{i_2}\gamma^{i_3}\}
=\langle \gamma^{i_1}\gamma^{i_3},\, \gamma^{i_2}\rangle\cong 
(\Z_2)^2$$
of $\Gamma^I\cong 
(\Z_2)^3.$ 
We  show that $(\LieK_0,\LieH_I,\hat{\Gamma})$ is a $\hat{\Gamma}$-symmetric triple. 
 Since $\LieH_I\subset\Fix(\Gamma^I,\LieK_0)\subset \Fix(\hat\Gamma,\LieK_0)$ we only have to show
 $\Fix(\hat\Gamma,\LieK_0)_0\subset \LieH_I$ which amounts to 
 $\fix(\hat\Gamma,\LieK_0)\subset \lieH_I.$ By the calculations at the beginning of this section, it is sufficient to show that 
 $\hat\Root^+\subset\Root^0_I.$ Let 
 $\alpha=e_j-e_k=\sum\limits_{j\leq s<k} \alpha_s$ with $1\leq j<k\leq r+1$ be a positive root. 
 \begin{itemize}
  \item Assume that  $j\leq i_2<k.$ Then $\alpha(\xi_{i_2})=1$ and $\alpha\notin \hat\Root^+,$ because $\gamma^{i_2}\in\hat\Gamma.$
  \item Assume that $k\leq i_2.$ 
	    \begin{itemize}
		\item If $j\leq i_1<k\leq i_2,$ then $\alpha(\xi_{i_1})=1,\; \alpha(\xi_{i_3})=0$ and therefore 
		$\alpha(\xi_{i_1}+\xi_{i_3})=1.$ Since $\gamma^{i_1}\gamma^{i_3}\in\hat\Gamma,$ we have
		$\alpha\notin \hat\Root^+.$
		\item If $k\leq i_1,$ then $\alpha(\xi_{i_1})=\alpha(\xi_{i_2})=\alpha(\xi_{i_3})=0$ and 
		$\alpha\in\Root^0_I.$
		\item If $i_1<j<k\leq i_2,$ then $\alpha(\xi_{i_1})=\alpha(\xi_{i_2})=\alpha(\xi_{i_3})=0$ and 
		$\alpha\in\Root^0_I.$
	    \end{itemize}
  \item Assume that $i_2<j.$ 
	    \begin{itemize}
		\item If $i_2<j\leq i_3<k,$ then $\alpha(\xi_{i_1}+\xi_{i_3})=1.$ Since $\gamma^{i_1}\gamma^{i_3}\in\hat\Gamma,$ we have
		$\alpha\notin \hat\Root^+.$
		\item If $i_3<j,$ then  $\alpha(\xi_{i_1})=\alpha(\xi_{i_2})=\alpha(\xi_{i_3})=0$ and 
		$\alpha\in\Root^0_I.$
		\item If $i_2<j<k\leq i_3,$ then $\alpha(\xi_{i_1})=\alpha(\xi_{i_2})=\alpha(\xi_{i_3})=0$ and 
		$\alpha\in\Root^0_I.$
	    \end{itemize}
\end{itemize}
If $P$ is associated with the orthogonal symmetric Lie algebra $(\lieG=\lieSU_{r+1},\, \lieK=\lieSO_{r+1}),$  
then the coset space $\LieK_0/\LieH_I$ can be identified with the real flag manifold 
  $$\{(V_1,V_2,V_3)\; |\; \{0\}\subsetneq V_1\subsetneq V_2\subsetneq V_3 \subsetneq\R^{r+1},\;
 \dim(V_k)=i_k\;
\text{for}\;  k=1,2,3\}.$$
Almost all classical $\Z_2\times\Z_2$-symmetric spaces 
have been classified by \textsc{Bahturin} and \textsc{Goze} \cite{Bahturin-Goze}. The missing classical and all exceptional cases have 
been added by \textsc{Kollross} \cite{Kollross}.
\end{ex}

%%%%%%%%

\section*{Acknowledgements}
The authors are grateful to \textsc{Jost-Hinrich Eschenburg} for helpful discussions. The second author 
would like to thank the University of Augsburg for 
their hospitality during  his visit.

\end{document}